\newtheorem{thm}{Theorem}
\newtheorem{lem}[thm]{Lemma}
\newtheorem{prop}[thm]{Proposition}
\Crefname{lem}{Lemma}{Lemmas}
\theoremstyle{definition}
\title{Globally optimizing small codes in real projective spaces}%Small optimal codes in real projective spaces}
\author{Dustin G. Mixon \and Hans Parshall}
\address{Department of Mathematics, The Ohio State University, Columbus, OH 43210, USA}
\email{mixon.23@osu.edu}
\address{Department of Mathematics, The Ohio State University, Columbus, OH 43210, USA}
\email{parshall.6@osu.edu}
\begin{document}

\begin{abstract}
For $d\in\{5,6\}$, we classify arrangements of $d + 2$ points in $\mathbf{RP}^{d-1}$ for which the minimum distance is as large as possible.
To do so, we leverage ideas from matrix and convex analysis to determine the best possible codes that contain equiangular lines, and we introduce a notion of approximate Positivstellensatz certificates that promotes numerical approximations of Stengle's Positivstellensatz certificates to honest certificates.
\end{abstract}

\maketitle

\section{Introduction}

Given a compact metric space $(X,d)$ and a positive integer $n$, it is natural to consider a subset $C\subseteq X$ that maximizes the minimum distance $\delta(C):=\min_{x,y\in C,x\neq y}d(x,y)$.
Such a subset, known as an \textbf{optimal $n$-code} for $(X,d)$, is guaranteed to exist by compactness.
Optimal codes are maximally robust to noise, since one can identify $x\in C$ from any noisy version $\hat{x}\in X$ that satisfies $d(\hat{x},x)<\delta(C)/2$.
Optimal codes have been an object of study ever since a legendary dispute in 1694 between Isaac Newton and David Gregory~\cite{casselman04}.
Consider the unit sphere $S^2\subseteq \mathbf{R}^3$ with distance inherited from the ambient Euclidean distance.
In our language, Gregory asserted that an optimal $13$-code $C$ for $S^2$ has $\delta(C)\geq1$.
Interest in spherical codes was rejuvenated in 1930 by the Dutch botanist Tammes, who studied the distribution of pores on pollen grains~\cite{tammes30}.
Thanks to this resurgence, Gregory was finally proved wrong in 1953 by Sch\"{u}tte and van der Waerden~\cite{schutte53}.

In 1948, Claude Shannon founded the field of information theory~\cite{shannon48}, which in turn motivated the pursuit of optimal codes over $\mathbf{Z}_2^n$ with Hamming distance.
Noteworthy optimal codes in this metric space include the Golay code~\cite{golay49} and the Hamming code~\cite{hamming50}.
This metric space can be viewed in terms of the Cayley graph on $\mathbf{Z}_2^n$ with generators given by the identity basis.
More generally, every graph produces a metric space consisting of the vertex set and the graph's geodesic distance.
In this language, the independence number of a graph is the largest $n$ for which an optimal $n$-code $C$ satisfies $\delta(C)>1$.
For example, the independence number of the Paley graph is of particular interest in number theory~\cite{chung89,hanson19}.
The connection between optimal codes and independence numbers has been rather fruitful, as the Lov\'{a}sz--Schrijver bound can be generalized to obtain useful bounds for a variety of metric spaces~\cite{delaat15}.

In 1996, Conway, Hardin and Sloane~\cite{conway96} posed the problem of finding optimal codes for Grassmannian spaces with \textbf{chordal distance}, defined as follows:
Given two subspaces $U,V\subseteq\mathbf{F}^d$ of dimension $r$ with principal angles $\{\theta_i\}_{i\in[r]}$, then $d(U,V)=(\sum_i\sin^2\theta_i)^{1/2}$.
In the time since this seminal paper, there has been a flurry of progress in the special case of projective spaces due in part to emerging applications in multiple description coding~\cite{strohmer03}, digital fingerprinting~\cite{mixon13}, compressed sensing~\cite{bandeira13}, and quantum state tomography~\cite{renes04}.
Most of this work takes a particular form:
Identify a collection $S$ of mathematical objects such that for every $s\in S$, there exists an explicit optimal $n$-code in $\mathbf{FP}^{d-1}$, where $n=n(s)$ and $d=d(s)$.
For example, one may take $S$ to be the set of regular two-graphs~\cite{seidel76}, $n(s)$ the number of vertices in $s$, and $d(s)$ the multiplicity of the positive eigenvalue of $s$; indeed, for every $s\in S$, one may construct an optimal $n$-code for $\mathbf{RP}^{d-1}$ known as an \textit{equiangular tight frame}~\cite{strohmer03}.
See \cite{fickus15} for a survey of these developments.

Due to this style of progress, the current literature on optimal codes for real projective spaces is rather spotty; while we have provably optimal $n$-codes for $\mathbf{RP}^{d-1}$ for infinitely many $(d,n)$, large gaps remain.
In what follows, we identify where these gaps first emerge.
It is straightforward to verify that for $n\leq d$, the optimal $n$-codes for $\mathbf{RP}^{d-1}$ correspond to orthogonal lines.
For $n=d+1$, the optimal $n$-codes are obtained from regular simplices centered at the origin; indeed, the lines spanned by the vertices correspond to an equiangular tight frame~\cite{strohmer03,fickus15}.
However, for $n=d+2$, the optimal $n$-codes for $\mathbf{RP}^{d-1}$ are unknown for most values of $d$.
In this paper, we focus on this minimal case.

Since $\mathbf{RP}^1$ is a circle, the $d=2$ case is uniquely solved by four uniformly spaced points.
The $d=3$ case is far less trivial, and was originally solved by Fejes T\'{o}th~\cite{fejes65} in 1965.
The optimal code is unique up to isometry, and can be obtained by removing any one of the six lines that are determined by antipodal vertices of the icosahedron.
A second treatment of this proof was provided by Benedetto and Kolesar~\cite{benedetto06} in 2006.
Finally, Fickus, Jasper and Mixon~\cite{fickus18} gave a third, more general treatment in 2018, and the ideas of their proof also solved the $d=4$ case.
This optimal code is unique up to isometry, and corresponds to the putatively optimal code provided by Sloane on his website~\cite{sloaneDatabase}.
Following~\cite{fickus18}, the putatively optimal codes for $d\in\{5,6\}$ can be expressed in terms of Gram matrices of unit-vector representatives of each line:
\begin{equation}\label{eq:winners}
\normalsize{G_5 := \scriptsize{\left[
\begin{matrix*}[r]
1 & -a & a & -a & \phantom{-}a & -a & a \\
-a & 1 & a & a & a & -a & a \\
a & a & 1 & -a & a & a & -a\\
-a & a & -a & 1 & a & -a & -a\\
a & a & a & a & 1 & a & a\\
-a & -a & a & -a & a & 1 & -a\\
a & a & -a & -a & a & -a & 1
\end{matrix*}\right]}, \quad
G_6 := \scriptsize{\left[
\begin{matrix*}[r]
1 & b & b & -b & b & c & b & -b\\
b & 1 & -b & -b & -b & -b & -c & -b\\
b & -b & 1 & -b & -b & -b & -b & -b\\
-b & -b & -b & 1 & b & -b & b & -b\\
b & -b & -b & b & 1 & -b & -b & b\\
c & -b & -b & -b & -b & 1 & b & -b\\
b & -c & -b & b & -b & b & 1 & b\\
-b & -b & -b & -b & b & -b & b & 1
\end{matrix*}\right]}}
\end{equation}
where $a>0$ is the second smallest root of $x^3-9x^2-x+1$, $b>0$ is the second smallest root of
\begin{equation}
\label{eq.6x8 coherence}
106x^6 - 264x^5 - 53x^4 + 84x^3 + 20x^2 - 4x - 1,
\end{equation}
and $c\in(0,b)$ is the fourth smallest root of
\[
53x^6+484x^5+814x^4-860x^3-347x^2+352x-32.
\]
Note that $\sqrt{1-a^2}$ and $\sqrt{1-b^2}$ are lower bounds on the minimum chordal distance of optimal $(d+2)$-codes for $\mathbf{RP}^{d-1}$ for $d\in\{5,6\}$, respectively.
Recently, Bukh and Cox~\cite{bukh19} proved the best-known upper bound on this minimum distance:
\begin{equation}
\label{eq.bukh--cox}
\delta(C)\leq\sqrt{1-\big(\tfrac{3}{2d+1}\big)^2},
\end{equation}
and furthermore, they characterized the codes that achieve equality in this bound, which occurs for every $d\equiv 1\bmod 3$.
In particular, this gives an alternate proof of the $d=4$ case.
As a result, the next open cases are $d\in\{5,6,8\}$.

In this paper, we resolve the cases of $d\in\{5,6\}$.
As conjectured, $G_5$ and $G_6$ above describe optimal codes for $\mathbf{RP}^4$ and $\mathbf{RP}^5$, which are unique up to isometry.
The next section reviews the preliminaries that set up our approach.
In particular, \cref{prop:fjm} (that is, Lemma~6 in~\cite{fickus18}) implies that every optimizer is necessarily an optimizer of one of a handful of subprograms.
These subprograms come in two different species, and in Section~3, we apply ideas from matrix and convex analysis to solve the first species; specifically, we determine the best possible $(d+2)$-codes that contain $d+1$ equiangular lines.
In Section~4, we apply this theory to the $d=5$ case, and we solve the second species with a clever application of cylindrical algebraic decomposition.
This approach does not scale to the $d=6$ case.
As an alternative, Section~5 introduces a method to convert numerical approximations of Stengle's Positivstellensatz certificates into honest certificates.
This allows us to tackle the $d=6$ case in Section~6, where we solve the second species of subprograms by computing numerical approximations of Positivstellensatz certificates using a Julia-based implementation of sum-of-squares programming.
We conclude in Section~7 by discussing opportunities for future work.

The proofs of our main results are computer assisted. Our computations were performed on a 3.4 GHz Intel Core i5, and we report runtimes throughout to help identify computational bottlenecks. While our code is far from optimized, we make it available with the arXiv version of this paper.

\section{Preliminaries}

We identify an $n$-code for $\mathbf{RP}^{d - 1}$ with a set of $n$ lines through the origin of $\mathbf{R}^d$.  We seek to classify the optimal $n$-codes, that is, sets of $n$ lines for which the minimum angle between any two lines is maximized.  The cosine of the minimum angle is known as the \textbf{coherence}, and so classifying optimal $n$-codes is equivalent to classifying sets of lines with the minimum coherence.

Let $U^{d \times n}$ denote the set of $d \times n$ real matrices with unit-norm columns.  We can specify an $n$-code for $\mathbf{RP}^{d - 1}$ using a matrix $\Phi \in U^{d \times n}$ whose column vectors span the $n$ lines in $\mathbf{R}^d$. Let $B_n$ denote the group of $n \times n$ signed permutation matrices.  Observe that $\Phi, \Psi \in U^{d \times n}$ specify the same $n$-code of (unordered) points in $\mathbf{RP}^{d - 1}$ if and only if there exists $P \in B_n$ such that $\Phi P^T = \Psi$.  Moreover, $\Phi, \Psi \in U^{d \times n}$ specify the same $n$-code for $\mathbf{RP}^{d - 1}$ \emph{up to isometry} if and only if there exists $P \in B_n$ such that $P \Phi^T \Phi P^T = \Psi^T \Psi$.  Let $E_{n,d}$ denote the rank-constrained elliptope
\[
E_{n,d} := \{G \in \mathbf{R}^{n \times n} : G = G^T, \operatorname{diag}(G) = \mathbf{1}, G \succeq 0, \operatorname{rank}(G) \leq d\}.
\]
We say that $G, G' \in E_{n,d}$ are \textbf{equivalent} if there exists $P \in B_n$ such that $P G P^T = G'$.  Observe that the resulting equivalence classes correspond to isometry classes of $n$-codes for $\mathbf{RP}^{d - 1}$, and we can recover a representation $\Phi \in U^{d \times n}$ of one such $n$-code by decomposing $G = \Phi^T \Phi$.  The coherence of the lines represented by $G$ is given by
\[
\mu(G) := \max_{1 \leq i < j \leq n} |G_{ij}|.
\]
Hence, our problem is equivalent to computing
\[
\mu_{n,d} := \inf\{\mu(G) : G \in E_{n,d}\}
\]
and classifying the corresponding optimizer(s), which necessarily exist by compactness.  We say $G \in E_{n,d}$ is \textbf{optimal} if $\mu(G) = \mu_{n,d}$. 

In principle, one may directly apply Tarski--Seidenberg~\cite{mishra93} to find optimal $G$, but in practice, quantifier elimination over the reals is slow. For example, cylindrical algebraic decomposition (CAD)~\cite{collins75} is known to have runtimes that are doubly exponential in the number of variables~\cite{davenport88}, which is already too slow for the values of $n$ that we are interested in. For this reason, we need to somehow reduce the problem size before passing to tools like CAD. To this end, in the special case where $n=d+2$, optimal $G \in E_{n, d}$ are known to satisfy certain (strong) combinatorial constraints:

\begin{prop}[Lemma~6 in~\cite{fickus18}]\label{prop:fjm}
Suppose $G \in E_{d + 2, d}$ is optimal and put $\mu = \mu(G)$.
Then
\[
G = I + \mu S + X,
\]
where $I$ is the identity matrix, the matrices $I$, $S$ and $X$ are symmetric with disjoint support, the entries of $X$ all reside in $[-\mu,\mu]$, and the entrywise absolute value $|S|$ is the adjacency matrix of either
(i) $K_{d+1}$ union an isolated vertex or (ii) the complement of a maximum matching.
\end{prop}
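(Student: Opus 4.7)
The plan is a perturbation argument paired with Farkas-style duality, producing a low-rank symmetric dual-certificate matrix whose off-diagonal support is exactly the contact graph $\Gamma := \{(i,j) : i\neq j,\ |G_{ij}|=\mu\}$; a case analysis on the certificate's rank will then yield the two families (i) and (ii).

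Write $G = \Phi^T\Phi$ with $\Phi \in \mathbf{R}^{d \times (d+2)}$, let $N := \ker \Phi$, and note that $\dim N \geq 2$. A Welch-type comparison rules out $\operatorname{rank}(G)<d$ at the optimum (since $G$ would then also be feasible in $E_{d+2,d-1}$, whose Welch lower bound strictly exceeds any achievable coherence for $E_{d+2,d}$), so $\dim N = 2$. The first-order tangent directions to $E_{d+2,d}$ at $G$, intersected with the zero-diagonal subspace, are precisely the symmetric matrices $H$ with $H_{ii}=0$ and $\pi_N H \pi_N = 0$, where $\pi_N$ is orthogonal projection onto $N$. If $G$ is optimal, no such $H$ can simultaneously satisfy $\operatorname{sgn}(G_{ij})\,H_{ij} < 0$ for every $(i,j) \in \Gamma$, since such an $H$ would yield strict descent in $\mu$. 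Applying Gordan's theorem produces nonnegative weights $\{\lambda_{ij}\}_{(i,j)\in\Gamma}$ (not all zero) and a diagonal matrix $D$ such that the symmetric matrix $P$, defined by $P_{ij} = \lambda_{ij}\operatorname{sgn}(G_{ij})$ on $\Gamma$, $P_{ij}=0$ off-diagonally outside $\Gamma$, and $P_{ii} = D_{ii}$, satisfies $P = \pi_N P \pi_N$. Hence $\operatorname{rank}(P) \leq 2$, the off-diagonal support of $P$ is exactly $\Gamma$, and the sign of each off-diagonal $P_{ij}$ matches that of the corresponding $G_{ij}$.

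The final step is a combinatorial classification of symmetric rank-$\leq 2$ matrices by their off-diagonal support and sign pattern. Writing $P_{ij} = \langle w_i, J w_j\rangle$ with $w_i \in \mathbf{R}^2$ and $J$ a $2\times 2$ symmetric signature matrix, the rank-1 case $P = \pm u u^T$ forces the off-diagonal support to be the clique on $\operatorname{supp}(u)$; a residual-tangent-direction argument constrains $|\operatorname{supp}(u)| = d+1$, yielding case~(i). In the rank-2 definite case, zeros of $P$ correspond to orthogonality of the $w_i$ in $\mathbf{R}^2$, so the complement of $\Gamma$ decomposes as a disjoint union of complete-bipartite blocks $K_{a_k,b_k}$; combining the sign pattern (forced by $\operatorname{sgn}(G_{ij})$ on $\Gamma$) with the Gram structure of $G$ forces each block to reduce to a single edge $K_{1,1}$, so the complement of $\Gamma$ is a matching. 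Maximality of the matching follows by counting edges and residual tangent dimensions. The rank-2 indefinite case reduces to the definite case by a hyperbolic change of coordinates in $\mathbf{R}^2$.

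The main obstacle is this last step: excluding non-trivial bipartite zero-blocks $K_{a,b}$ with $\max(a,b) \geq 2$ in the rank-2 case, and establishing matching maximality. Both require carefully combining the sign compatibility enforced by the certificate $P$ with the additional constraints from $G$ being a unit-diagonal PSD matrix of rank exactly $d$, and this combinatorial interplay is the crux of the Fickus--Jasper--Mixon argument.
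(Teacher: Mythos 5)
This proposition is not proved in the paper; it is cited directly as Lemma~6 of Fickus--Jasper--Mixon~\cite{fickus18}, so there is no in-paper proof to compare against. Evaluating your sketch against the FJM argument as I recall it: the high-level skeleton you describe (rank exactly $d$, a two-dimensional kernel $N$, a first-order optimality condition along the smooth stratum of the rank variety, a rank-$\leq 2$ dual certificate $P=\pi_N P\pi_N$, then a combinatorial classification of which off-diagonal sign patterns such a low-rank symmetric matrix can support) is indeed the right set of ingredients, and this is essentially the spirit of FJM's argument.

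That said, what you have is a sketch with two genuine gaps. First, Gordan's theorem yields nonnegative multipliers $\lambda_{ij}$ that are not \emph{all} zero; it does not force every $\lambda_{ij}$ to be positive. So the off-diagonal support of $P$ is only a \emph{subset} of the contact graph $\Gamma$, not equal to it as you assert. This is not fatal --- the proposition's conclusion only asks that a sign pattern $S$ with $\operatorname{supp}(S)\subseteq\Gamma$ be one of the two graph types, with the remaining contacts absorbed into $X$ --- but you need to run the subsequent classification on $\operatorname{supp}(P)$, not on $\Gamma$, and be careful that the degree/dimension counts you invoke later refer to the correct object. Second, and more seriously, the final classification step (ruling out bipartite blocks $K_{a,b}$ with $\max(a,b)\geq 2$ in the rank-2 case, pinning $|\operatorname{supp}(u)|=d+1$ in the rank-1 case, and establishing maximality of the matching) is precisely the combinatorial heart of the lemma, and you have explicitly deferred it rather than carried it out. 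Your ``residual-tangent-direction argument'' and ``Gram structure of $G$'' appeals name plausible mechanisms but do not constitute a proof. A smaller loose end: the Welch-type comparison ruling out $\operatorname{rank}(G)<d$ requires knowing an upper bound on $\mu_{d+2,d}$ (i.e., exhibiting \emph{some} code in $E_{d+2,d}$ beating the $E_{d+2,d-1}$ Welch bound), which you assume but do not supply; it does hold, e.g.\ by comparing the Welch bound $\sqrt{3/(d^2-1)}$ for $E_{d+2,d-1}$ against a simplex-plus-one-vector construction or the Bukh--Cox equality codes, but the step as written is not self-contained.
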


In what follows, we assume $n=d+2$ without mention.
\cref{prop:fjm} considerably reduces the search space for optimal $G\in E_{n,d}$.
Let $\mathcal{S}_1\subseteq\mathbf{R}^{n \times n}$ denote the set of symmetric $S$ for which $|S|$ is the adjacency matrix of $K_{d + 1}$ union an isolated vertex, and similarly, let $\mathcal{S}_2\subseteq\mathbf{R}^{n \times n}$ denote the set of symmetric $S$ for which $|S|$ is the adjacency matrix of the complement of a maximum matching.
Letting $\circ$ denote entrywise matrix product, then for each $S \in \mathcal{S}_1 \cup \mathcal{S}_2$, we consider the subprogram  
\[
m(S)
:= \inf\{ \mu : I + \mu S + X \in E_{n, d}, (I + S)\circ X=0, -\mu\leq X_{ij} \leq\mu \}.
\]
\cref{prop:fjm} implies that $\mu_{n,d}=\min\{m(S):S\in\mathcal{S}_1\cup\mathcal{S}_2\}$, and we can recover each optimal $G \in E_{n, d}$ from the minimizers of $m(S)$.
We call $S \in \mathcal{S}_1 \cup \mathcal{S}_2$ \textbf{optimal} if $m(S) = \mu_{n, d}$.

Given $P\in B_n$, then $(\mu,X)$ is feasible in the program defining $m(S)$ if and only if $(\mu,PXP^T)$ is feasible in the program defining $m(PSP^T)$.
We may leverage this symmetry to further simplify our search for optimal $S$.
In particular, for each $i\in\{1,2\}$, the conjugation action of $B_n$ partitions $\mathcal{S}_i$ into orbits, and we say that two members of the same orbit are \textbf{equivalent}.
We may select a representative from each orbit to produce $\mathcal{R}_i\subseteq\mathcal{S}_i$.
Then
\[
\mu_{n,d}=\min\{m(S):S\in\mathcal{R}_1\cup\mathcal{R}_2\},
\]
and furthermore, every optimal $G\in E_{n,d}$ corresponding to an optimal $S\not\in\mathcal{R}_1\cup\mathcal{R}_2$ is equivalent to some optimal $G'\in E_{n,d}$ corresponding to an optimal $S'\in\mathcal{R}_1\cup\mathcal{R}_2$.  We select the members of $\mathcal{R}_1$ to be zero in the last row and column and the members of $\mathcal{R}_2$ to be zero in the last $\lfloor n/2 \rfloor$ diagonal $2\times 2$ blocks. This determines the support of both types of matrices.

\begin{table}
\begin{tabular}{cccrrrcl}
$d$ & $n$ & $\mu$ & \qquad min polynomial & \qquad $|\mathcal{R}_1|$ & \qquad $|\mathcal{R}_2|$ & \qquad & optimality\\
\hline
$3$ & $5$ & $0.4473$ & $5x^2 - 1$ & $3$ & $6$ && Ref.~\cite{fejes65,benedetto06,fickus18}\\
$4$ & $6$ & $0.3334$ & $3x - 1$ & $7$ & $14$ && Ref.~\cite{fickus18,bukh19}\\
$5$ & $7$ & $0.2863$ & $x^3 - 9x^2 - x + 1$ & $16$ & $144$ && Thm.~\ref{thm:5by7}\\
$6$ & $8$ & $0.2410$ & Eq.~\eqref{eq.6x8 coherence} & $54$ & $560$ && Thm.~\ref{thm:6by8}\\
$7$ & $9$ & $0.2000$ & $5x - 1$ & $243$ & $49,127$ && Ref.~\cite{bukh19}\\
$8$ & $10$ & $0.1828$ & $19x^2 + 2x - 1$ & $2,038$ & $599,108$ && ---
\end{tabular}
\caption{Parameters of optimal $n$-codes for $\mathbf{RP}^{d-1}$ with $n=d+2$. Coherence $\mu$ is rounded to the next multiple of $10^{-4}$, and we provide the minimal polynomial of $\mu$ to specify its precise value. As a consequence of \cref{prop:fjm}, every optimizer is necessarily an optimizer of a subprogram specified by some $S\in\mathcal{R}_1\cup\mathcal{R}_2$, suggesting that one solves each of these subprograms; the sheer number of subprograms makes this approach infeasible for the $d=8$ case.
For the other cases, we provide the location(s) of the proof(s) of optimality.
\label{table.opt packings}}
\end{table}

As we will see, optimizing over $\mathcal{R}_1$ is easier than optimizing over $\mathcal{R}_2$, and we will apply different techniques to perform these optimizations.
Before discussing these techniques, we first determine the sizes of $\mathcal{R}_1$ and $\mathcal{R}_2$ to help establish which values of $d$ are amenable to this approach.
For every member of $\mathcal{R}_1$, the off-diagonal entries are only nonzero on the leading $(d+1)\times(d+1)$ principal submatrix.
Restricting to this submatrix, then the members of $\mathcal{R}_1$ are precisely the Seidel adjacency matrices of switching class representatives on $d+1$ vertices, which were counted by Mallows and Sloane~\cite{mallows75}.
In \cref{table.opt packings}, we report the size of $\mathcal{R}_1$ for $d\in\{3,\ldots,8\}$.

The size of $\mathcal{R}_2$ does not appear in the literature, and so we apply Burnside's lemma to formulate a fast algorithm that computes it.
Let $\mathcal{T}_2\supseteq\mathcal{R}_2$ denote the subset of $\mathcal{S}_2$ that is zero in the last $\lfloor n/2 \rfloor$ diagonal $2\times 2$ blocks. Next, consider the conjugation action of $B_n$ on $\mathcal{S}_2$, and let $F_2$ denote the largest subgroup of $B_n$ that acts invariantly on $\mathcal{T}_2$. Then our choice for $\mathcal{R}_2$ equates to representatives of orbits of the action of $F_2$ on $\mathcal{T}_2$. By Burnside, the size of $\mathcal{R}_2$ then equals the average number of points in $\mathcal{T}_2$ that are fixed by a random member of $F_2$. By construction, every member of $\mathcal{T}_2$ has the same support above the diagonal $E\subseteq\{(i,j):1\leq i<j\leq n\}$, and for each $P\in F_2$, the mapping $X\mapsto PXP^T$ over symmetric $X$ induces a signed permutation $P_E$ over $\mathbf{R}^E$, which enjoys a unique decomposition into disjoint signed cycles. If any of these cycles features an odd number of sign changes, then there is no $x\in\{\pm1\}^E$ for which $P_Ex=x$, and so $P$ has no fixed points in $\mathcal{T}_2$. Write $O\subseteq F_2$ for this subset of $P$'s. If $P\not\in O$, then the number of points fixed by $P$ equals $2^{k(P)}$, where $k(P)$ denotes the number of disjoint signed cycles in the decomposition of $P_E$. Overall, we have
\[
|\mathcal{R}_2|
=\frac{1}{|F_2|}\sum_{P\in F_2}
\left\{\begin{array}{cl}
0&\text{if }P\in O\\
2^{k(P)}&\text{else}
\end{array}\right\},
\]
which can be computed quickly by iterating over members of $B_n$. See \cref{table.opt packings} for the result of this computation for $d\in\{3,\ldots,8\}$.

In what follows, we describe our methodology for minimizing $m(S)$ subject to $S\in\mathcal{R}_1\cup\mathcal{R}_2$ in the cases where $d\in\{5,6\}$. In vague terms, our approach performs a computation for each $S$ and then compares the results. Considering \cref{table.opt packings}, we expect this approach to require about a thousand times as much runtime to resolve the next open case of $d=8$, even if the per-$S$ runtime matches the $d=6$ case (in reality, it is slower). As such, new ideas will be necessary to tackle this case.

\section{Codes from equiangular lines}

In this section, we prove results that will help us to estimate $m(S)$ for every $S\in\mathcal{R}_1$.

\begin{lem}\label{lem:eig}
Let $S \in \mathcal{R}_1$ and let $\lambda$ be the minimum eigenvalue of its leading $(d + 1) \times (d + 1)$ principal submatrix.  Then $m(S) \in \{-\lambda^{-1}, \infty\}$.
\end{lem}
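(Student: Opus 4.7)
The plan is to isolate the feasible values of $\mu$ by exploiting the block structure forced by $S \in \mathcal{R}_1$. Since $S$ vanishes in its last row and column, with zero diagonal and $\pm 1$ off-diagonal entries in its leading $(d+1) \times (d+1)$ principal block (call it $S'$), the constraint $(I + S) \circ X = 0$ restricts $X$ to be supported only on the off-diagonal of its last row and last column. Consequently every feasible $G = I + \mu S + X$ has the block form
\[
G = \begin{pmatrix} A & x \\ x^T & 1 \end{pmatrix}, \qquad A := I_{d+1} + \mu S', \qquad x \in [-\mu,\mu]^{d+1}.
\]

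First I would note that $\lambda < 0$: the matrix $S'$ is symmetric, nonzero, and has zero diagonal, hence zero trace, so its spectrum contains a strictly negative value. The eigenvalues of $A$ are $1 + \mu\lambda_i(S')$, so $A \succeq 0$ if and only if $\mu \leq -\lambda^{-1}$. Because $A$ is a principal submatrix of $G$, Cauchy interlacing yields $\lambda_{\min}(G) \leq \lambda_{\min}(A)$, and so the PSD constraint on $G$ forces $\mu \leq -\lambda^{-1}$.

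The main step is to exclude the strict inequality $\mu < -\lambda^{-1}$. In this regime $A \succ 0$, hence $\operatorname{rank}(A) = d + 1$. Applying the Schur complement with respect to the positive $(n,n)$ entry $1$ gives a congruence factorization of $G$ with diagonal blocks $A - xx^T$ and $1$, so
\[
\operatorname{rank}(G) = 1 + \operatorname{rank}(A - xx^T).
\]
Thus the rank constraint $\operatorname{rank}(G) \leq d$ becomes $\operatorname{rank}(A - xx^T) \leq d - 1$. Since a rank-one perturbation can decrease the rank by at most one, $\operatorname{rank}(A - xx^T) \geq d$, contradicting the rank constraint regardless of the choice of $x$.

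Combining the two steps pins the feasible values of $\mu$ to the (possibly empty) singleton $\{-\lambda^{-1}\}$, which is exactly the assertion $m(S) \in \{-\lambda^{-1}, \infty\}$. I expect the only mild obstacle to be the Schur-complement bookkeeping in the third paragraph; everything else reduces to standard interlacing and rank-one perturbation facts. Whether $m(S) = -\lambda^{-1}$ or $m(S) = \infty$ depends on the existence of a specific $x$ in the box $[-\mu,\mu]^{d+1}$ at $\mu = -\lambda^{-1}$, but that finer dichotomy is not needed for the present lemma.
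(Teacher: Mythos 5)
Your proof is correct and takes the same basic approach as the paper: use the PSD and rank constraints on $G$ to force the leading $(d+1)\times(d+1)$ block $A = I + \mu S'$ to be singular, which pins $\mu$ to $-\lambda^{-1}$. The only difference is that the paper gets there more directly, observing that $A$ is a principal submatrix of $G$ and hence inherits both $A \succeq 0$ and $\operatorname{rank}(A) \le \operatorname{rank}(G) \le d$, so that the $(d+1)\times(d+1)$ PSD matrix $A$ must be singular; your Schur-complement decomposition $\operatorname{rank}(G) = 1 + \operatorname{rank}(A - xx^T)$ combined with the rank-one perturbation bound is a valid but heavier path to the same conclusion.
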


\begin{proof}
Suppose $m(S) \neq \infty$.  Then there exist $\mu$ and $X$ such that
\[
I + \mu S + X \in E_{d + 2, d},
\qquad
(I + S)\circ X=0,
\qquad
-\mu\leq X_{ij} \leq\mu.
\]
In particular, $I + \mu S + X \succeq 0$, and so $I + \mu S' \succeq 0$, where $S'$ is the leading $(d + 1) \times (d + 1)$ principal submatrix of $S$.  Furthermore, $I + \mu S'$ has rank at most $d$, and so $1 + \mu \lambda = 0$.
\end{proof}

The next result requires a definition:
We say $\{v_i\}_{i\in[l]}$ in $\mathbf{R}^d$ are conically dependent if there exists $j\in[l]$ and nonnegative $\{\alpha_i\}_{i\in[l]\setminus\{j\}}$ such that
\[
v_j
=\sum_{i\in[l]\setminus\{j\}}\alpha_iv_i.
\]
Otherwise, we say $\{v_i\}_{i\in[l]}$ are \textbf{conically independent}.

\begin{lem}\label{lem:eiginf}
Let $S \in \mathcal{R}_1$, suppose the minimum eigenvalue $\lambda < 0$ of its leading $(d + 1) \times (d + 1)$ principal submatrix $S'$ has multiplicity $1$, take $L\in\mathbf{R}^{(d+1)\times d}$ such that $LL^T = I - \lambda^{-1}S'$, and consider the pseudoinverse given by $L^\dagger=(L^TL)^{-1}L^T$.
\begin{itemize}
\item[(a)]
Suppose $\| L^\dagger y \|_2 < -\lambda$ for every $y \in \{\pm 1\}^{d + 1}$.
Then $m(S) = \infty$.
\item[(b)]
Suppose there exists a nonempty subset $\mathcal{Y}\subseteq \{\pm 1\}^{d + 1}$ such that $\{L^\dagger y\}_{y\in\mathcal{Y}}$ is conically independent, $\| L^\dagger y \|_2 < -\lambda$ for every $y \in \{\pm 1\}^{d + 1}\setminus\mathcal{Y}$, and for every $y\in\mathcal{Y}$, the matrix
\[
Z(y):=\begin{bmatrix}0 & y\\ y^T & 0\end{bmatrix}
\]
has the property that $S+Z(y)$ has minimum eigenvalue $\lambda$ with multiplicity $2$.
Then $m(S) = -\lambda^{-1}$ and the corresponding minimizers are given by $X=-\lambda^{-1} Z(y)$ for $y\in\mathcal{Y}$.
\end{itemize}
\end{lem}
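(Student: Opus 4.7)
The plan is to reduce both parts to analyzing which vectors $z\in[-1,1]^{d+1}$ can witness feasibility of the subprogram at $\mu:=-\lambda^{-1}$; by \cref{lem:eig}, only this value of $\mu$ matters.

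For the feasibility parametrization, note that since $S\in\mathcal{R}_1$ has its off-diagonal support inside the leading $(d+1)\times(d+1)$ block, the constraint $(I+S)\circ X=0$ forces $X=\begin{bmatrix}0 & x\\ x^T & 0\end{bmatrix}$ for some $x\in\mathbf{R}^{d+1}$ with $\|x\|_\infty\le\mu$.  The leading block of $G:=I+\mu S+X$ is $LL^T$, which has rank $d$ with one-dimensional kernel spanned by the $\lambda$-eigenvector $v$ of $S'$.  A Schur-complement computation, together with the identity $(LL^T)^\dagger=(L^\dagger)^T L^\dagger$, shows that $G\succeq 0$ and $\operatorname{rank}(G)\le d$ exactly when $v^Tx=0$ and $\|L^\dagger x\|_2=1$.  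After the rescaling $z:=-\lambda x\in[-1,1]^{d+1}$, feasibility at $\mu=-\lambda^{-1}$ is equivalent to the existence of $z\in[-1,1]^{d+1}$ satisfying $v^Tz=0$ and $\|L^\dagger z\|_2=-\lambda$.

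For part~(a), the quadratic form $f(z):=\|L^\dagger z\|_2^2$ is convex, so it attains its maximum on the cube $[-1,1]^{d+1}$ at some vertex $y\in\{\pm 1\}^{d+1}$.  By hypothesis $f(y)<\lambda^2$ for every such $y$, so $f(z)<\lambda^2$ throughout the cube and no feasible $z$ exists; \cref{lem:eig} then forces $m(S)=\infty$.

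For part~(b), I will first reinterpret the eigenvalue hypothesis.  Solving the eigenvalue equation for $S+Z(y)$ at $\lambda$ shows that if $v^Ty\ne 0$ then there are no $\lambda$-eigenvectors at all, while if $v^Ty=0$ then a second $\lambda$-eigenvector (beyond $\begin{bmatrix}v\\ 0\end{bmatrix}$) exists precisely when $y^T(\lambda I-S')^\dagger y=\lambda$.  The identity $(\lambda I-S')^\dagger=\lambda^{-1}(LL^T)^\dagger$ rewrites this as $\|L^\dagger y\|_2=-\lambda$, so the hypothesis on $y\in\mathcal{Y}$ is exactly the feasibility condition for $z=y$.  Hence each $X=-\lambda^{-1}Z(y)$ is feasible, which combined with \cref{lem:eig} gives $m(S)=-\lambda^{-1}$.

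To pin down the minimizers, let $z$ be any feasible vector and write $z=\sum_{y\in\{\pm 1\}^{d+1}}p_y y$ as a convex combination.  By convexity of $f$ and the hypothesis $f(y)<\lambda^2$ off $\mathcal{Y}$,
\[
\lambda^2=f(z)\le\sum_y p_y f(y)\le\sum_y p_y\lambda^2=\lambda^2
\]
forces every inequality to be an equality.  The right equality gives $p_y>0\Rightarrow y\in\mathcal{Y}$, while equality in Jensen for $u\mapsto\|u\|_2^2$ forces $L^\dagger y$ to be constant on $\{y:p_y>0\}$.  Conic independence of $\{L^\dagger y\}_{y\in\mathcal{Y}}$ then rules out any two distinct supported $y$ (since $L^\dagger y_1=L^\dagger y_2$ is already a conic dependence), so $z=y$ for some $y\in\mathcal{Y}$, as claimed.

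The most delicate step will be the feasibility parametrization, since it requires handling a rank-deficient Schur complement via the Moore--Penrose pseudoinverse and, for part~(b), invoking the identity $(\lambda I-S')^\dagger=\lambda^{-1}(LL^T)^\dagger$ to match the multiplicity-$2$ eigenvalue condition with a norm condition on $y$.
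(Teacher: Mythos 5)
Your proposal is correct, and it reaches the result by a route that differs from the paper's in two substantive ways, so a comparison is worthwhile.

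The paper parametrizes feasible $G\in E_{d+2,d}$ at $\mu=-\lambda^{-1}$ by a Gram factorization $G=A^TA$ with $A=[L^T~x]$, $x\in\mathbf{R}^d$ a unit vector, and the constraint $\|Lx\|_\infty\leq\mu$; part (a) is then a chain of inequalities passing through the operator quantity $\sup_{\|z\|_2=1}\|Lz\|_\infty^{-1}$ and its reformulation as $\max_{y\in\{\pm1\}^{d+1}}\|L^\dagger y\|_2$. You instead parametrize by the last column $x\in\mathbf{R}^{d+1}$ of $X$ and read off feasibility via a generalized Schur complement (using $(LL^T)^\dagger=(L^\dagger)^TL^\dagger$ and Guttman rank additivity): after rescaling $z=-\lambda x$, feasibility is \emph{exactly} $z\in[-1,1]^{d+1}$, $v^Tz=0$, $\|L^\dagger z\|_2=-\lambda$. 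This makes (a) an immediate consequence of convexity of $\|L^\dagger\cdot\|_2^2$ maximized at a cube vertex, avoiding the paper's operator-norm chain. For (b), the paper obtains $\|L^\dagger y\|_2=-\lambda$ for $y\in\mathcal Y$ by again factoring $I-\lambda^{-1}(S+Z(y))=A^TA$ and solving $Lx=-\lambda^{-1}y$; you instead solve the $\lambda$-eigenvector equation for $S+Z(y)$ directly and translate the multiplicity-$2$ hypothesis into $v^Ty=0$ and $y^T(\lambda I-S')^\dagger y=\lambda$, equivalently $\|L^\dagger y\|_2=-\lambda$. Both paths then invoke conic independence, but you close the argument via equality in Jensen for the strictly convex $\|\cdot\|_2^2$ (forcing $L^\dagger y$ constant on the support of the convex combination, then singleton support), whereas the paper closes via a strict triangle inequality plus a second contradiction to rule out mass outside $\mathcal Y$. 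Your version is a genuine streamlining: the Schur-complement characterization unifies (a) and (b) around a single geometric statement about the cube, at the modest cost of invoking the rank-additivity formula for a singular block; the paper's factorization approach is more elementary in that sense but leads to a longer case analysis in (b). One small remark: in (b) you only use the forward implication (the eigenvalue hypothesis $\Rightarrow$ the norm condition), and the positive-semidefiniteness and rank of $I-\lambda^{-1}(S+Z(y))$ follow directly from $\lambda$ being the minimum eigenvalue with multiplicity $2$, so no converse to your eigenvector computation is needed; it would be worth saying this explicitly when writing the proof in full.
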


\begin{proof}
First, $\lambda < 0$ since $S$ is a nonzero matrix with zero trace.
Hence, $I - \lambda^{-1}S' \succeq 0$, and since $I - \lambda^{-1}S'$ has rank at most $d$, there exists $L\in\mathbf{R}^{(d+1)\times d}$ such that $LL^T = I - \lambda^{-1}S'$.
In fact, $L$ has rank exactly $d$ since $\lambda$ is an eigenvalue of $S'$ with multiplicity $1$.

(a)
We will prove this claim by contraposition, and so we suppose $m(S) \neq \infty$. 
By \cref{lem:eig}, it follows that $m(S) = -\lambda^{-1}$.
Set $\mu = m(S)$ and consider the set
\[
\mathcal{X} := \{X : I + \mu S + X \in E_{d + 2, d}, (I + S)\circ X=0, -\mu\leq X_{ij} \leq\mu\}.
\]
Since $m(S) \neq \infty$, a compactness argument gives that $\mathcal{X}$ is nonempty, and we may select $X \in \mathcal{X}$ and obtain a decomposition of the form $I + \mu S + X = A^TA$, where $A = [L^T~x]$ and $x \in \mathbf{R}^d$ is a unit vector satisfying $\| Lx \|_\infty \leq \mu = -\lambda^{-1}$.
Since $L$ has rank $d$, it holds that $\| Lx \|_\infty>0$.
Thus,
\begin{align*}
-\lambda
\leq \| Lx \|_\infty^{-1}
\leq \sup_{\|z\|_2=1} \| Lz \|_\infty^{-1}
= \sup_{z \neq 0} \frac{ \| z \|_2}{ \| Lz \|_\infty }
&= \sup_{y \in \operatorname{im}(L)\setminus \{0\}} \frac{\| L^\dagger y \|_2}{\| y \|_\infty}\\
&\leq \sup_{y \neq 0} \frac{\| L^\dagger y \|_2}{\| y \|_\infty}
= \sup_{y \in B_\infty^{d + 1}} \| L^\dagger y \|_2
=\max_{y \in \{\pm 1\}^{d + 1}} \| L^\dagger y \|_2,
\end{align*}
where the last step uses the fact that the maximum of a convex function over a compact polytope is achieved at a vertex of that polytope.

(b)
Since $\mathcal{Y}$ is nonempty, there exists $y$ such that $I-\lambda^{-1}(S+Z(y))$ is positive semidefinite with rank $d$, and so $m(S)\neq\infty$.
Then by \cref{lem:eig}, it holds that $m(S) = -\lambda^{-1}$.
It remains to show that the minimizers $X\in \mathcal{X}$ of the program defining $m(S)$ are $X=-\lambda^{-1}Z(y)$ for $y\in\mathcal{Y}$.

First, we show that $\| L^\dagger y \|_2=-\lambda$ for every $y\in\mathcal{Y}$.
To see this, fix $y\in\mathcal{Y}$ and consider the decomposition $I-\lambda^{-1}(S+Z(y))=A^TA$, where $A=[L^T~x]$.
Then $x$ has unit norm and $Lx=-\lambda^{-1}y$.
We apply $-\lambda L^\dagger$ to both sides and take norms to get $\| L^\dagger y \|_2=\|-\lambda x\|_2=-\lambda$.
As such,
\begin{equation}
\label{eq.max minus lambda}
\max_{y \in \{\pm 1\}^{d + 1}} \| L^\dagger y \|_2
=-\lambda.
\end{equation}
Next, we follow the proof of (a) to see that every $X\in \mathcal{X}$ yields a decomposition $I+\mu S+X=A^TA$ with $A=[L^T~x]$, where $x$ has unit norm and
\[
-\lambda
\stackrel{(*)}{\leq} \| Lx \|_\infty^{-1}
\stackrel{(\dagger)}{\leq} \sup_{\|z\|_2=1} \| Lz \|_\infty^{-1}
= \sup_{y \in \operatorname{im}(L)\setminus \{0\}} \frac{\| L^\dagger y \|_2}{\| y \|_\infty}
\stackrel{(\ddagger)}{\leq} \sup_{y \neq 0} \frac{\| L^\dagger y \|_2}{\| y \|_\infty}
=\max_{y \in \{\pm 1\}^{d + 1}} \| L^\dagger y \|_2
=-\lambda,
\]
where the last step comes from \eqref{eq.max minus lambda}. 
By equality, we may conclude a few things.
First, equality in ($\dagger$) implies $x\in\arg\max\{\|Lz\|_\infty^{-1}:\|z\|_2=1\}$, and so a change of variables gives
\[
Lx
\in\arg\max\{\|y\|_\infty^{-1}:\|L^\dagger y\|_2=1,y\in\operatorname{im}(L)\}
\subseteq\arg\max\Big\{\tfrac{\|L^\dagger y\|_2}{\|y\|_\infty}:y\in\operatorname{im}(L)\setminus\{0\}\Big\}.
\]
Next, equality in ($*$) implies $\|Lx\|_\infty=-\lambda^{-1}$, and so we further have
\[
-\lambda Lx
\in\arg\max\{\|L^\dagger y\|_2:\|y\|_\infty\leq1,y\in\operatorname{im}(L)\}
\subseteq\arg\max\{\|L^\dagger y\|_2:\|y\|_\infty\leq1\},
\]
where the last step follows from equality in ($\ddagger$).
We claim that $\arg\max\{\|L^\dagger y\|_2:\|y\|_\infty\leq1\}=\mathcal{Y}$.
Our result follows from this intermediate claim since $-\lambda Lx=y\in\mathcal{Y}$ implies
\[
I+\mu S+X
=A^TA
=\begin{bmatrix}LL^T & Lx\\ x^TL^T & x^Tx\end{bmatrix}
=\begin{bmatrix}I-\lambda^{-1}S' & -\lambda^{-1} y\\ -\lambda^{-1} y^T & 1\end{bmatrix}
=I+\mu S- \lambda^{-1}Z(y),
\]
and so rearranging gives that every minimizer $X\in\mathcal{X}$ is of the form $X=-\lambda^{-1}Z(y)$, as desired.

We use convexity to prove $\mathcal{M}:=\arg\max\{\|L^\dagger y\|_2:\|y\|_\infty\leq1\}=\mathcal{Y}$.
First, we know $\mathcal{Y}\subseteq \mathcal{M}$ since the maximum of a convex function over a compact polytope is achieved at a vertex of that polytope.
For the sake of contradiction, suppose this containment is proper, that is, there exists $y_0\in \mathcal{M}\setminus\mathcal{Y}$.
By convexity, we may write $y_0=\sum_{v\in\{\pm1\}^{d+1}} c_v v$ with $c_v\geq0$ and $\sum_vc_v=1$.

In what follows, we show that $c_v>0$ for some $v\in\{\pm1\}^{d+1}\setminus\mathcal{Y}$.
Suppose otherwise that $c_v$ is only nonzero for $v\in\mathcal{Y}$.
Since $y_0\not\in\mathcal{Y}$, then there exists a subset $\mathcal{Y}'\subseteq\mathcal{Y}$ of size at least $2$ such that $c_v$ is nonzero precisely when $v\in\mathcal{Y}'$.
By assumption, $\{L^\dagger y\}_{y\in\mathcal{Y}'}$ is conically independent.
As such, picking $y_1\in\mathcal{Y}'$, it holds that $c_{y_1}L^\dagger y_1$ is not a positive scalar multiple of $\sum_{y\in\mathcal{Y}'\setminus\{y_1\}}c_y L^\dagger y$, and so
\begin{align*}
-\lambda
=\|L^\dagger y_0\|_2
=\bigg\|L^\dagger \sum_{y\in\mathcal{Y}'}c_y y\bigg\|_2
&=\bigg\|c_{y_1} L^\dagger y_1+\sum_{y\in\mathcal{Y}'\setminus\{y_1\}}c_y L^\dagger y\bigg\|_2\\
&<\|c_{y_1} L^\dagger y_1\|_2+\bigg\|\sum_{y\in\mathcal{Y}'\setminus\{y_1\}}c_y L^\dagger y\bigg\|_2
\leq \sum_{y\in\mathcal{Y}'}c_y\|L^\dagger y\|_2
=-\lambda,
\end{align*}
a contradiction.
Overall, it must be the case that $c_v>0$ for some $v\in\{\pm1\}^{d+1}\setminus\mathcal{Y}$.

Finally, $\|L^\dagger y_0\|_2=\|L^\dagger y\|_2=-\lambda$ for every $y\in\mathcal{Y}$ and $\|L^\dagger y_0\|_2\leq\sum_{v\in\{\pm1\}^{d+1}} c_v \|L^\dagger v\|_2$, and so
\begin{align*}
-\lambda
&=\frac{1}{1-\sum_{y\in\mathcal{Y}}c_y}\Big(\|L^\dagger y_0\|_2-\sum_{y\in\mathcal{Y}}c_y\|L^\dagger y\|_2\Big)\\
&\leq\frac{1}{1-\sum_{y\in\mathcal{Y}}c_y}\sum_{v\in\{\pm1\}^{d+1}\setminus\mathcal{Y}} c_v \|L^\dagger v\|_2
\leq \max_{v\in\{\pm1\}^{d+1}\setminus\mathcal{Y}}\|L^\dagger v\|_2
\leq -\lambda.
\end{align*}
By equality, we then conclude that $\max_{v\in\{\pm1\}^{d+1}\setminus\mathcal{Y}}\|L^\dagger v\|_2=-\lambda=\|L^\dagger y\|_2$ for every $y\in\mathcal{Y}$, which contradicts the fact that $\arg\max\{\|L^\dagger y\|_2:y\in\{\pm1\}^{d+1}\}=\mathcal{Y}$.
\end{proof}

\section{The optimal \texorpdfstring{$7$-code}{7-code} for \texorpdfstring{$\mathbf{RP}^4$}{RP4}}\label{5by7section}

In this section we fix $n = 7$ and $d = 5$ and prove the following classification.

\begin{thm}\label{thm:5by7}
$G \in E_{7,5}$ is optimal if and only if $G$ is equivalent to $G_5$, given in \eqref{eq:winners}.
\end{thm}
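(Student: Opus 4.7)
The plan is to invoke \cref{prop:fjm} to reduce the classification of optimal $G \in E_{7,5}$ to the solution of the $16 + 144 = 160$ subprograms indexed by $S \in \mathcal{R}_1 \cup \mathcal{R}_2$ (with sizes drawn from \cref{table.opt packings}). For each such $S$, we compute $m(S)$ and identify its minimizing $X$; taking the minimum over $S$ yields $\mu_{7,5}$, and collecting the corresponding $G = I + \mu_{7,5} S + X$ produces every optimal code. The final step is to verify that all such $G$ lie in a single $B_7$-orbit, namely that of $G_5$.

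For the $16$ matrices $S \in \mathcal{R}_1$, we apply \cref{lem:eiginf} in each case. Concretely, we compute the minimum eigenvalue $\lambda$ of the leading $6 \times 6$ principal submatrix $S'$ (checking that it has multiplicity $1$, as required), extract $L \in \mathbf{R}^{6 \times 5}$ with $LL^T = I - \lambda^{-1} S'$, enumerate $\{\pm 1\}^6$, and record the set $\mathcal{Y}$ of maximizers of $y \mapsto \|L^\dagger y\|_2$. If the maximum is strictly less than $-\lambda$, then $m(S) = \infty$ by part (a). Otherwise, we verify that $\{L^\dagger y\}_{y\in \mathcal{Y}}$ is conically independent and that $S + Z(y)$ has $\lambda$ as a double eigenvalue for each $y \in \mathcal{Y}$, and conclude from part (b) that $m(S) = -\lambda^{-1}$ with minimizers $X = -\lambda^{-1} Z(y)$, $y \in \mathcal{Y}$. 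Each check is a finite linear-algebraic computation over the algebraic number field generated by $\lambda$, so all $16$ cases can be dispatched symbolically.

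For the $144$ matrices $S \in \mathcal{R}_2$, \cref{lem:eiginf} no longer applies, as its structural hypothesis (an equiangular $(d+1) \times (d+1)$ block) is specific to $\mathcal{R}_1$. Here we instead encode the feasibility conditions
\[
I + \mu S + X \succeq 0, \qquad \operatorname{rank}(I + \mu S + X) \leq 5, \qquad (I + S) \circ X = 0, \qquad -\mu \leq X_{ij} \leq \mu
\]
as a first-order formula in the theory of real closed fields: positive semidefiniteness becomes nonnegativity of the characteristic coefficients, while the rank bound becomes vanishing of all $6 \times 6$ minors. Projecting onto $\mu$ via cylindrical algebraic decomposition (CAD) then determines $m(S)$ exactly for each $S \in \mathcal{R}_2$. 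The main obstacle is computational: CAD scales doubly exponentially in the number of variables, so the naive encoding is infeasible. The plan is to exploit the stabilizer of each $S$ in $B_7$ to identify free entries of $X$ that may be pinned (up to sign) at an optimum, reducing the variable count before invoking CAD; it also suffices to prove the weaker inequality $m(S) \geq a$ rather than to compute $m(S)$ exactly, permitting early termination in many cases.

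Finally, comparing $m(S)$ across all $160$ subprograms, the minimum is expected to equal $a$, the second smallest root of $x^3 - 9x^2 - x + 1$. It remains to collect the corresponding $(S, X)$ pairs, form the Gram matrices $G = I + a S + X$, and verify that they lie in a single $B_7$-orbit, that of $G_5$. We expect the $\mathcal{R}_2$ analysis to be the dominant technical burden, since the $144$ CAD subproblems dominate both runtime and the risk of combinatorial oversight.
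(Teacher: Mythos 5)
Your high-level structure matches the paper: invoke \cref{prop:fjm}, split into $\mathcal{R}_1$ and $\mathcal{R}_2$, handle $\mathcal{R}_1$ via \cref{lem:eiginf}, and handle $\mathcal{R}_2$ via CAD. But two of your implementation steps diverge in ways that matter. For $\mathcal{R}_1$, you propose to apply \cref{lem:eiginf} to all $16$ representatives, but that lemma's hypotheses (multiplicity-one minimum eigenvalue, a clean dichotomy between parts (a) and (b)) need not hold for every $S$, and if they fail you have no fallback. The paper avoids this by first applying \cref{lem:eig} together with the Bukh--Cox-derived window $\tfrac{3}{11}\leq -\lambda^{-1}\leq 0.2863$ to discard $14$ of the $16$ cases outright, leaving only two for which the hypotheses of \cref{lem:eiginf} are then actually verified. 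This pre-filter is what keeps the $\mathcal{R}_1$ analysis airtight.

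For $\mathcal{R}_2$, your plan to ``pin free entries of $X$ (up to sign) at an optimum'' via the stabilizer of $S$ does not hold up. Since the feasible region of the program defining $m(S)$ is nonconvex (the rank constraint breaks convexity), there is no guarantee that a stabilizer-invariant minimizer exists, so symmetry does not let you fix a variable to a constant; at best it constrains a sign, which leaves the variable count unchanged. The paper makes CAD tractable by a different device: it \emph{relaxes} the rank constraint to demand only that three carefully chosen $6\times 6$ minors vanish, selecting them so that two involve only $\mu$, $X_{45}$, $X_{67}$ and the third is linear in $X_{23}$. This structural choice is what lets CAD eliminate variables in sequence and terminate in minutes, reducing $144$ candidates to $11$, each of which is then checked to produce the same equiangular Gram matrix (rank $5$, coherence the root of $x^3-9x^2-x+1$). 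Without a replacement for that relaxation, your $\mathcal{R}_2$ step has a genuine gap.
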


\begin{proof}
First, we recall the bounds on $\mu_{7,5}$ implied by the Bukh--Cox bound in \eqref{eq.bukh--cox} and the code represented by $G_5$ in \eqref{eq:winners}:
\begin{equation}\label{eq:5by7bounds}
\frac{3}{11} \leq \mu_{7,5} \leq 0.2863.
\end{equation}
These bounds will play a role in our analysis of both $\mathcal{R}_1$ and $\mathcal{R}_2$.

Let $\mathcal{T}_1$ denote the subset of $\mathcal{S}_1$ that is zero in its last row and column, and let $F_1$ be the subgroup of $B_n$ that acts invariantly on $\mathcal{T}_1$.  Every member of $\mathcal{T}_1$ is equivalent to a matrix of the form
\begin{equation}\label{eq:r1n7}
\normalsize{
\scriptsize{\left[
\begin{array}{rrrrrrr}
0 & 1 & 1 & 1 & 1 & 1 & \phantom{\pm} 0\\
1 & 0 & \pm 1 & \pm 1 & \pm 1 & \pm 1 & 0\\
1 & \pm 1 & 0 & \pm 1 & \pm 1 & \pm 1 & 0\\ 
1 & \pm 1 & \pm 1 & 0 & \pm 1 & \pm 1 & 0\\
1 & \pm 1 & \pm 1 & \pm 1 & 0 & \pm 1 & 0\\
1 & \pm 1 & \pm 1 & \pm 1 & \pm 1 & 0 & 0\\
0 & 0 & 0 & 0 & 0 & 0 & 0
\end{array}
\right]}
}
\end{equation}
and so we can generate orbits of $\mathcal{T}_1$ under the action of $F_1$ by generating the orbits of these $2^{10}$ matrices.  We then build $\mathcal{R}_1$ by selecting one representative from each orbit of the form \eqref{eq:r1n7}, and this takes under one second.

For each $S \in \mathcal{R}_1$, we compute the minimum eigenvalue $\lambda$ of its leading $6 \times 6$ principal submatrix.  By \cref{lem:eig} and \eqref{eq:5by7bounds}, we know that if $S$ is optimal, then $\frac{3}{11} \leq -\lambda^{-1} \leq 0.2863$, and this rules out all but two members of $\mathcal{R}_1$ from being optimal.  For each of these two remaining members, we verify that $\lambda$ has multiplicity 1, compute $L^\dagger$ according to the setup of \cref{lem:eiginf}, and compute $\|L^\dagger y\|_2$ for every $y \in \{\pm 1\}^6$.  In one case, we verify that $\|L^\dagger y\|_2 < -\lambda$ for every $y \in \{\pm 1\}^6$, and so this case is eliminated by \cref{lem:eiginf}(a).  For the only remaining $S \in \mathcal{R}_1$, we obtain $\mathcal{Y} \subseteq \{\pm 1\}^6$ satisfying the hypotheses of \cref{lem:eiginf}(b) and set $\mu = -\lambda^{-1} \approx 0.2863$ with minimal polynomial $x^3 - 9x^2 - x + 1$.  Applying \cref{lem:eiginf}(b) reveals that any minimizer $X$ for $m(S)$ leads to a Gram matrix $I + \mu S + X$ whose off-diagonal entries are all $\pm \mu$.  This corresponds to a set of 7 equiangular lines in $\mathbf{R}^5$, unique up to isometry, reported by Bussemaker and Seidel as the complement of the 25th two-graph of order 7 in Table 1 of~\cite{bussemaker81}.  To show that this configuration is optimal, we must still analyze $\mathcal{R}_2$.

Let $\mathcal{T}_2$ denote the subset of $\mathcal{S}_2$ with zero entries in its last 3 diagonal $2 \times 2$ blocks, and let $F_2$ be the subgroup of $B_n$ that acts invariantly on $\mathcal{T}_2$.  Every member of $\mathcal{T}_2$ is equivalent to a matrix of the form
\begin{equation}\label{eq:r2n7}
\normalsize{\scriptsize{
\left[
\begin{array}{rrrrrrr}
0 & 1 & 1 & 1 & 1 & 1 & 1\\
1 & 0 & 0 & \pm 1 & \pm 1 & \pm 1 & \pm 1\\
1 & 0 & 0 & \pm 1 & \pm 1 & \pm 1 & \pm 1\\
1 & \pm 1 & \pm 1 & 0 & 0 & \pm 1 & \pm 1\\
1 & \pm 1 & \pm 1 & 0 & 0 & \pm 1 & \pm 1\\
1 & \pm 1 & \pm 1 & \pm 1 & \pm 1 & 0 & 0\\
1 & \pm 1 & \pm 1 & \pm 1 & \pm 1 & 0 & 0\\
\end{array}
\right]},}
\end{equation}
and so we can generate orbits of $\mathcal{T}_2$ under the action of $F_2$ by generating the orbits of these $2^{12}$ matrices.  We then build $\mathcal{R}_2$ by selecting one representative from each orbit of the form~\eqref{eq:r2n7}, and this takes under one minute.

For each member $S \in \mathcal{R}_2$, we build the corresponding Gram matrix $G = I + \mu S + X$ with variable entries $\{\mu, X_{23}, X_{45}, X_{67}\}$.  We restrict $\mu$ according to \eqref{eq:5by7bounds} and $X_{23}, X_{45}, X_{67} \in [-\mu,\mu]$.  If $S$ is optimal, then there must be a choice of $\mu$ and $X$ for which $G$ is positive semidefinite and of rank $5$.  We can determine if such a choice of variables exists by solving the system of polynomial equalities and inequalities resulting from ensuring that each $6 \times 6$ minor of $G$ vanishes, some $5 \times 5$ minor of $G$ does not vanish, and each principal minor is nonnegative.  In principle, a solution is provided by CAD, but even after our reduction to this $4$-variable system, its exceedingly slow runtime makes it necessary to relax our problem.  We relax our rank and positive semidefinite constraints to simply ask for three $6 \times 6$ minors of $G$ to vanish, two of which are polynomials only in $\mu, X_{45}, X_{67}$, and the third of which is linear in $X_{23}$.  Then after roughly two minutes, CAD reports that out of the 144 representatives $S \in \mathcal{R}_2$, only 11 allow the prescribed minors to vanish with $\mu$ satisfying \eqref{eq:5by7bounds} and $X_{23}, X_{45}, X_{67} \in [-\mu,\mu]$.  Moreover, for each of these 11 representatives, $\mu$ is the root of $x^3 - 9x^2 - x + 1$ reported in Table 1 and $X_{23}, X_{45}, X_{67} \in \{\pm \mu\}$, and so each resulting Gram matrix corresponds to a set of equiangular lines with coherence $\mu$.  Each of these Gram matrices has rank 5, and therefore correspond to the previously described set of 7 equiangular lines in $\mathbf{R}^5$.
\end{proof}

Our use of CAD here does not scale to the $d=6$ case, and so the next section describes an alternative approach involving Stengle's Positivstellensatz.

\section{Approximate Positivstellensatz}

Let $\mathbf{R}[x]=\mathbf{R}[x_1,\ldots,x_n]$ denote the set of polynomials with real coefficients and variables $x_1,\ldots,x_n$.
Let $\Sigma^2[x]$ denote the set of polynomials that can be expressed as a sum of squares of polynomials from $\mathbf{R}[x]$.
Given $f_1(x),\ldots,f_k(x),g_1(x),\ldots,g_l(x)\in\mathbf{R}[x]$, put $f:=\{f_i(x)\}_{i\in[k]}$ and $g:=\{g_j(x)\}_{j\in[l]}$, and consider the sets
\[
P(f):=\Big\{x\in\mathbf{R}^n:f_i(x)\geq0~\forall i\in[k]\Big\},
\qquad
Z(g):=\Big\{x\in\mathbf{R}^n:g_j(x)=0~\forall j\in[l]\Big\}.
\]
Then every polynomial in the cone
\[
C(f)
:=\bigg\{\sum_{I\subseteq[k]}s_I(x)\prod_{i\in I}f_i(x):s_I(x)\in\Sigma^2[x] ~ \forall I\subseteq[k]\bigg\}
\]
is nonnegative over $P(f)$, while every polynomial in the ideal
\[
I(g)
:=\bigg\{\sum_{j\in[l]}t_j(x)g_j(x):t_j(x)\in\mathbf{R}[x] ~ \forall j\in[l]\bigg\}
\]
is zero over $Z(g)$.
As such, writing $p(x)+q(x)=-1$ with $p(x)\in C(f)$ and $q(x)\in I(g)$ would certify that $P(f)\cap Z(g)$ is empty.
Amazingly, such a certificate is available whenever $P(f)\cap Z(g)$ is empty:

\begin{prop}[Stengle's Positivstellensatz~\cite{stengle74}]
The following are equivalent:
\begin{itemize}
\item[(a)]
$P(f)\cap Z(g)=\emptyset$.
\item[(b)]
$-1\in C(f)+ I(g)$.
\end{itemize}
\end{prop}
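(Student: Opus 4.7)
The plan is to split the proof into the two directions. The implication (b) $\Rightarrow$ (a) is immediate: any $x \in P(f) \cap Z(g)$ would satisfy $p(x) \geq 0$ for every $p \in C(f)$ (products and sums of squares are nonnegative on $P(f)$) and $q(x) = 0$ for every $q \in I(g)$, so evaluating a hypothetical certificate $p + q = -1$ at such an $x$ would yield $-1 \geq 0$, a contradiction. Hence $P(f) \cap Z(g) = \emptyset$.

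For (a) $\Rightarrow$ (b) I would argue the contrapositive: assuming $T := C(f) + I(g)$ does not contain $-1$, I will produce a point in $P(f) \cap Z(g)$. First observe that $T$ is a preorder in $\mathbf{R}[x]$: it is closed under addition and multiplication and contains every square, since $C(f)$ has these properties, $I(g)$ is an ideal, and the defining sum structure preserves them. By Zorn's lemma applied to preorders containing $T$ and avoiding $-1$, extend $T$ to a maximal such preorder $T^* \subseteq \mathbf{R}[x]$.

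The heart of the argument consists of three Krivine-style lemmas about $T^*$. First, for every $p \in \mathbf{R}[x]$, at least one of $p$ or $-p$ lies in $T^*$; this is proved by inspecting the preorders generated by $T^* \cup \{p\}$ and $T^* \cup \{-p\}$, at least one of which must avoid $-1$ by the identity $-1 = s_1 + s_2 p = s_3 - s_4 p$ together with squaring to eliminate $p$. Second, the support $\mathfrak{p} := T^* \cap (-T^*)$ is a prime ideal of $\mathbf{R}[x]$. Third, the image of $T^*$ in the integral domain $A := \mathbf{R}[x]/\mathfrak{p}$ descends to a total ordering, which extends uniquely to the field of fractions $K$ of $A$. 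Let $R$ denote a real closure of $K$ with respect to this order. Then the images $\xi_i \in R$ of the coordinate functions $x_i$ satisfy $f_i(\xi) \geq 0$ for every $i$ (since $f_i \in T \subseteq T^*$) and $g_j(\xi) = 0$ for every $j$ (since $g_j$ and $-g_j$ both lie in $T^*$, so $g_j \in \mathfrak{p}$).

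To descend from $R$ back to $\mathbf{R}$, I would invoke the Tarski--Seidenberg transfer principle: the existential sentence ``there exist $x_1, \ldots, x_n$ with $f_i(x) \geq 0$ for all $i$ and $g_j(x) = 0$ for all $j$'' is first-order in the language of ordered fields, and since $\mathbf{R}$ and $R$ are both real closed they satisfy the same such sentences; hence the sentence holds in $\mathbf{R}$, contradicting (a). The main obstacle is the algebraic content of the three Krivine lemmas, particularly verifying that a maximal preorder avoiding $-1$ induces a genuine total ordering on the quotient domain; once this is in place, the Zorn extension and the Tarski transfer step at the end are classical.
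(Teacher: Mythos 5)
The paper states Stengle's Positivstellensatz as a cited black box without proof, so there is no in-paper argument to compare against; your sketch is the standard textbook proof from real algebraic geometry (Krivine, Prestel--Delzell, Marshall, Bochnak--Coste--Roy), and it is essentially sound. Two places where your phrasing is imprecise and worth tightening: (i) for the totality of the maximal preorder $T^*$, ``squaring to eliminate $p$'' is not quite the right move --- the standard argument takes $-1 = s_1 + t_1 p$ and $-1 = s_2 - t_2 p$ with $s_i, t_i \in T^*$, rearranges to $t_1 p = -(1+s_1)$ and $t_2 p = 1 + s_2$, multiplies to get $t_1 t_2 p^2 = -(1+s_1)(1+s_2)$, and expands to conclude $-1 = s_1 + s_2 + s_1 s_2 + t_1 t_2 p^2 \in T^*$, a contradiction; (ii) for the transfer step, saying that $\mathbf{R}$ and $R$ ``satisfy the same such sentences'' because both are real closed invokes completeness of the theory RCF, which covers only parameter-free sentences, whereas your sentence has parameters (the real coefficients of the $f_i$ and $g_j$), so you in fact need model completeness, i.e., that the embedding $\mathbf{R} \hookrightarrow R$ (which exists because $\mathbf{R} \cap \mathfrak{p} = \{0\}$) is elementary. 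With these small repairs the argument goes through.
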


In principle, one may hunt for Positivstellensatz certificates by fixing $D\in\mathbf{N}$ and restricting to a search for $p(x)$ and $q(x)$ of degree at most $D$, as this reduces to a semidefinite program.
As a proof of concept, Parrilo and Sturmfels~\cite{parrilo03} applied this method to prove that
\begin{equation}
\label{eq.ps example}
S=\Big\{(x,y)\in\mathbf{R}^2:x-y^2+3\geq0,~y+x^2+2=0\Big\}
\end{equation}
is empty.
In reproducing this proof, we found the Julia implementation of sum-of-squares programming to be particularly user-friendly~\cite{bezanson17,dunning17}.
However, as an artifact of numerical optimization, the resulting degree-$4$ polynomials $p(x)$ and $q(x)$ have the property that $p(x)+q(x)+1$ is also a degree-$4$ polynomial, but all of its coefficients have absolute value smaller than $10^{-12}$.
Indeed, numerical optimization will generally fail to deliver an exact Positivstellensatz certificate, meaning we cannot directly apply Stengle's Positivstellensatz.
As an alternative, we introduce the notion of an \textbf{approximate Positivstellensatz certificate}, taking inspiration from the approximate dual certificates that arise in compressed sensing and matrix completion~\cite{gross11,foucart13}.

\begin{lem}[Approximate Positivstellensatz]\label{approxpsatz}
Suppose $r>0$ and $\|x\|_\infty< r$ for every $x\in P(f)\cap Z(g)$.
Then the following are equivalent:
\begin{itemize}
\item[(a)]
$P(f)\cap Z(g)=\emptyset$.
\item[(b)]
There exists $h(x)=\sum_\alpha c_\alpha x^\alpha\in 1+ C(f)+ I(g)$ such that
\begin{equation}
\label{eq.bound on coefficients}
\max_\alpha|c_\alpha|
\leq\Bigg[\sum_{k=0}^{\operatorname{deg}h(x)}\binom{n+k-1}{k}r^k\Bigg]^{-1}.
\end{equation}
\end{itemize}
\end{lem}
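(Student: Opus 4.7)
The plan is to prove the two implications separately, with (a) $\Rightarrow$ (b) following immediately from the classical Stengle's Positivstellensatz, and (b) $\Rightarrow$ (a) proceeding by a direct pointwise estimate.

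For (a) $\Rightarrow$ (b), I would invoke Stengle's Positivstellensatz to write $-1 = p(x) + q(x)$ with $p \in C(f)$ and $q \in I(g)$. Then $h(x) := 1 + p(x) + q(x)$ is identically zero, hence belongs to $1 + C(f) + I(g)$, and all of its coefficients vanish. Since $r > 0$, the right-hand side of \eqref{eq.bound on coefficients} is strictly positive, so the coefficient bound is satisfied vacuously.

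For (b) $\Rightarrow$ (a), I would argue by contradiction. Suppose $x \in P(f) \cap Z(g)$, and write $h(x) = 1 + p(x) + q(x)$ with $p \in C(f)$ and $q \in I(g)$. By definition of $C(f)$ and $I(g)$, we have $p(x) \geq 0$ and $q(x) = 0$, so $h(x) \geq 1$. On the other hand, using $h(x) = \sum_\alpha c_\alpha x^\alpha$ and the bound $\|x\|_\infty < r$, one obtains
\[
|h(x)| \leq \sum_\alpha |c_\alpha| \cdot |x^\alpha|
\leq \Big(\max_\alpha |c_\alpha|\Big) \sum_{k=0}^{\operatorname{deg} h(x)} N(n,k) \, r^k,
\]
where $N(n,k) = \binom{n+k-1}{k}$ is the number of monomials of total degree exactly $k$ in $n$ variables (a standard stars-and-bars count). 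Combining this estimate with the hypothesis \eqref{eq.bound on coefficients} yields $|h(x)| \leq 1$, with strict inequality where it matters (since $\|x\|_\infty < r$ strictly), contradicting $h(x) \geq 1$. Hence $P(f) \cap Z(g) = \emptyset$.

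I do not anticipate serious obstacles. The only subtlety worth verifying carefully is the combinatorial count of monomials per total degree and the fact that the strict inequality $\|x\|_\infty < r$ propagates through the geometric-series-type estimate to yield a strict bound $|h(x)| < 1$; a cleaner phrasing replaces $<$ with $\leq$ by noting that $\|x\|_\infty \leq r'$ for some $r' < r$ on a closed set and applying the bound for $r'$, but this is unnecessary since strict inequality already suffices to contradict $h(x) \geq 1$. The statement is essentially a quantitative, effective version of Stengle's Positivstellensatz that tolerates the small numerical residuals produced by semidefinite programming: any polynomial identity that is ``close enough'' (in the $\ell^\infty$ sense on coefficients) to a true Positivstellensatz certificate already certifies emptiness on the box $\|x\|_\infty < r$.
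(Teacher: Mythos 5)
Your proof is correct and follows essentially the same route as the paper's: the forward direction invokes Stengle's Positivstellensatz and takes the trivial certificate $h = 0$, while the reverse direction combines the lower bound $h(x) \geq 1$ on $P(f) \cap Z(g)$ with the triangle-inequality and stars-and-bars monomial-count estimate $|h(x)| \leq \max_\alpha |c_\alpha| \sum_k \binom{n+k-1}{k} r^k \leq 1$ to derive a contradiction. The only cosmetic difference is that the paper explicitly splits on $h = 0$ versus $h \neq 0$ to justify where the strict inequality $\sum_\alpha |x^\alpha| < \sum_k \binom{n+k-1}{k} r^k$ arises, whereas you compress this into the phrase ``strict inequality where it matters''; your version also works since $h = 0$ already contradicts $h(x) \geq 1$ directly.
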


\begin{proof}
To see that (a) implies (b), set $h = 0$ and apply Stengle's Positivstellensatz.  Now suppose $h \in1+ C(f)+ I(g)$ satisfies~\eqref{eq.bound on coefficients}.  If $h = 0$, then (a) follows from Stengle's Positivstellensatz.  Otherwise $h \neq 0$.  If (a) fails, then there exists $x \in P(f)\cap Z(g)$, where $h$ satisfies
\[
1
\stackrel{(*)}{\leq}
h(x)
\leq|h(x)|
\stackrel{(\dagger)}{\leq} \max_\alpha|c_\alpha|\cdot \sum_\alpha |x^\alpha|
\stackrel{(\ddagger)}{<} \max_\alpha|c_\alpha|\cdot \sum_{k=0}^{\operatorname{deg}h(x)}\binom{n+k-1}{k}r^k
\stackrel{(\S)}{\leq} 1,
\]
a contradiction.
In particular, ($*$) follows from the fact that $h(x)\in1+ C(f)+ I(g)$, ($\dagger$) uses the triangle inequality, ($\ddagger$) applies our assumptions that $h\neq0$ and $\|x\|_\infty<r$ and the count of monomials of each degree $k$, and finally ($\S$) applies the bound \eqref{eq.bound on coefficients}.
\end{proof}

Returning to the example \eqref{eq.ps example}, one can easily prove a bound on $\max\{|x|,|y|\}$ for every $(x,y)\in S$.
For example, if $|x|\geq 3$, then
\[
2|x|
\geq|x+3|
\geq|y|^2
=|x^2+2|^2
\geq|x|^4,
\]
which implies $|x|\leq 2^{1/3}$, a contradiction.
As such, if $(x,y)\in S$, then it must hold that $|x|<3$, and therefore $|y|=|x|^2+2<11$.
Now that we know that $\max\{|x|,|y|\}<r:=11$ for every $(x,y)\in S$, we recall that our numerical optimizer produced a degree-$4$ polynomial $h(x,y)\in 1+ C(f)+ I(g)$ whose coefficients $c_\alpha$ all have absolute value smaller than $10^{-12}$.
A short computation shows
\[
\max_\alpha|c_\alpha|
\leq 10^{-12}
\leq \Bigg[\sum_{k=0}^{4}\binom{n+k-1}{k}r^k\Bigg]^{-1},
\]
meaning $h(x,y)$ serves as an approximate Positivstellensatz certificate that $S=\emptyset$.

When $r<1$, we note that \eqref{eq.bound on coefficients} can be replaced by the simpler bound
\begin{equation}
\label{eq.simpler bound}
\max_\alpha|c_\alpha|
\leq (1-r)^n,
\end{equation}
since in this case it holds that
\[
\sum_{k=0}^{\operatorname{deg}h(x)}\binom{n+k-1}{k}r^k
\leq \sum_{k=0}^{\infty}\binom{n+k-1}{k}r^k
=(1-r)^{-n}.
\]
We will apply this simpler bound in our classification of optimal $8$-codes for $\mathbf{RP}^5$.
%For example, if $n=5$ and $r=1/4$, then it suffices to have $\max_\alpha|c_\alpha|\leq 1/5$.

\section{The optimal \texorpdfstring{$8$-code}{8-code} for \texorpdfstring{$\mathbf{RP}^5$}{RP5}}

In this section we fix $n = 8$ and $d = 6$ and prove the following classification.

\begin{thm}\label{thm:6by8}
$G \in E_{8,6}$ is optimal if and only if $G$ is equivalent to $G_6$, given in \eqref{eq:winners}.
\end{thm}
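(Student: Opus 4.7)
The plan is to parallel the strategy of \cref{thm:5by7}, but to replace cylindrical algebraic decomposition (which does not scale to $n=8$) by the approximate Positivstellensatz developed in \cref{approxpsatz}. Two standing bounds drive the elimination throughout: the Bukh--Cox lower bound $\mu_{8,6}\geq \tfrac{3}{13}$ from \eqref{eq.bukh--cox}, and the upper bound $\mu_{8,6}\leq \mu^\star$, where $\mu^\star\approx 0.2410$ is the root of \eqref{eq.6x8 coherence} realized by $G_6$.

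For $\mathcal{R}_1$ ($54$ representatives): enumerate the orbits using the same sign-pattern sieve as in \cref{thm:5by7}. For each $S\in\mathcal{R}_1$, compute the minimum eigenvalue $\lambda$ of its leading $7\times 7$ principal submatrix; by \cref{lem:eig}, $S$ can contribute to an optimizer only when $-\lambda^{-1}\in[\tfrac{3}{13},\mu^\star]$, which should discard almost all candidates. For each survivor, verify simplicity of $\lambda$, build $L^\dagger$, and enumerate $\|L^\dagger y\|_2$ over $y\in\{\pm 1\}^7$ to invoke \cref{lem:eiginf}: case (a) immediately certifies $m(S)=\infty$, whereas case (b) would yield an equiangular $8$-line configuration in $\mathbf{R}^6$ of coherence $-\lambda^{-1}\leq\mu^\star$. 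Since $G_6$ is not equiangular, any such configuration would violate the uniqueness claim of the theorem, and so the computation must (and is expected to) return no case-(b) survivor.

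For $\mathcal{R}_2$ ($560$ representatives): this is the bulk of the work. Each $S\in\mathcal{R}_2$ yields a polynomial feasibility system in $5$ variables --- the coherence $\mu$ together with the four entries of $X$ supported on the maximum matching complementary to $|S|$ --- asking whether $G=I+\mu S+X$ can be positive semidefinite of rank at most $6$ subject to $\tfrac{3}{13}\leq\mu<\mu^\star$ and $|X_{ij}|\leq\mu$. I would encode the rank constraint by the vanishing of all $7\times 7$ minors of $G$ and positive semidefiniteness by nonnegativity of its principal minors, producing polynomial data $f,g$ for which one must show $P(f)\cap Z(g)=\emptyset$. Since all variables lie in the box $\|x\|_\infty\leq r$ with $r:=\mu^\star<1$, the simpler coefficient bound \eqref{eq.simpler bound} applies. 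For each $S$, I would feed the system to a Julia-based SOS solver, extract an approximate degree-$D$ certificate $h(x)\in 1+C(f)+I(g)$, rationalize the solver's output to exact arithmetic, verify by exact computation that the rounded $h$ still lies in $1+C(f)+I(g)$ (via an explicit SOS Gram-matrix decomposition together with ideal multipliers), and confirm $\max_\alpha|c_\alpha|\leq (1-r)^5$. By \cref{approxpsatz} this certifies $m(S)\geq\mu^\star$; the handful of $S$ for which no such certificate exists must, by elimination, correspond to the $B_8$-orbit of $G_6$, and reading off the unique admissible $(\mu,X)$ for those recovers $G_6$.

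The main obstacle is not the SOS search itself but the rationalization step in $\mathcal{R}_2$: turning the floating-point SOS output into an exact decomposition whose coefficients still clear the threshold $(1-r)^5$. This is precisely the gap between numerical sum-of-squares and classical Positivstellensatz that \cref{approxpsatz} was designed to bridge, and executing it automatically across $560$ instances --- choosing the certificate degree $D$, rounding the SOS Gram matrices to nearby rational positive semidefinite matrices, and keeping the residual coefficients small --- is the delicate computation at the heart of the argument.
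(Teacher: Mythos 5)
Your proposal matches the paper's strategy for the elimination phase (Bukh--Cox bounds, \cref{lem:eig}/\cref{lem:eiginf} on $\mathcal{R}_1$, approximate Positivstellensatz via \cref{approxpsatz} on most of $\mathcal{R}_2$), but there is a genuine gap at the end. Stengle/Positivstellensatz only certifies \emph{infeasibility}: a valid certificate shows $m(S) > \mu^\star$. For the representatives that survive the SOS sweep---the paper has exactly two---the failure to find a certificate proves nothing. It could mean the certificate degree was too low, the relaxation of the minor constraints was too weak, or the numerical solver stalled. You cannot conclude ``by elimination'' that these survivors lie in the $B_8$-orbit of $G_6$, and there is no ``unique admissible $(\mu,X)$'' to read off without actually solving the system. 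The theorem is an ``if and only if,'' so you must affirmatively show that for each survivor the feasible set is nonempty, the minimizing $\mu$ equals $\mu^\star$, and every minimizer is equivalent to $G_6$. In the paper this remaining step is carried out by a CAD query on a relaxed system for each of the two survivors; one takes about $18$ minutes and the other over three hours. Without an analogous positive step, your argument establishes neither the ``only if'' direction (you have not shown $\mu_{8,6} = \mu^\star$) nor the ``if'' direction (you have not shown $G_6$ is actually feasible and optimal).

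A secondary practical point: feeding the full encoding (all $7\times 7$ minors plus nonnegativity of all principal minors) to the SOS solver is very unlikely to terminate. The paper relaxes aggressively, taking $f$ to be just ten box constraints on $(\mu, X_{12}, X_{34}, X_{56}, X_{78})$ and $g$ to be a dozen hand-picked $7\times 7$ minors chosen so that some $X_{j,j+1}$ appears with degree at most one, and then works with truncated cones $C_{m_1}(f)$ and ideals $I_{m_2}(g)$ with $m_1 = 2$ and $m_2 \in \{0,1\}$. Also, rather than rounding SOS Gram matrices to nearby rational PSD matrices (which then requires a fresh PSD verification), it is cleaner to round the polynomial factors in each square and the ideal multipliers; membership in $1 + C(f) + I(g)$ is then automatic and only the coefficient bound \eqref{eq.simpler bound} needs to be checked.
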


\begin{proof}
First, we recall the bounds on $\mu_{8,6}$ implied by the Bukh--Cox bound in \eqref{eq.bukh--cox} and the code represented by $G_6$ in \eqref{eq:winners}:
\begin{equation}\label{eq:6by8bounds}
\frac{3}{13} \leq \mu_{8,6} \leq 0.2410.
\end{equation}
These bounds will play a role in our analysis of both $\mathcal{R}_1$ and $\mathcal{R}_2$.

Let $\mathcal{T}_1$ denote the subset of $\mathcal{S}_1$ that is zero in its last row and column, and let $F_1$ be the subgroup of $B_n$ that acts invariantly on $\mathcal{T}_1$.  Every member of $\mathcal{T}_1$ is equivalent to a matrix of the form
\begin{equation}\label{eq:r1n8}
\normalsize{\scriptsize{\left[
\begin{array}{rrrrrrrr}
0 & 1 & 1 & 1 & 1 & 1 & 1 & 0\\
1 & 0 & \pm 1 & \pm 1 & \pm 1 & \pm 1 & \pm 1 & \phantom{\pm} 0\\
1 & \pm 1 & 0 & \pm 1 & \pm 1 & \pm 1 & \pm 1 & 0\\ 
1 & \pm 1 & \pm 1 & 0 & \pm 1 & \pm 1 & \pm 1 & 0\\
1 & \pm 1 & \pm 1 & \pm 1 & 0 & \pm 1 & \pm 1 & 0\\
1 & \pm 1 & \pm 1 & \pm 1 & \pm 1 & 0 & \pm 1 & 0\\
1 & \pm 1 & \pm 1 & \pm 1 & \pm 1 & \pm 1 & 0 & 0\\
0 & 0 & 0 & 0 & 0 & 0 & 0 & 0
\end{array}
\right]}}
\end{equation}
and so we can generate orbits of $\mathcal{T}_1$ under the action of $F_1$ by generating the orbits of these $2^{15}$ matrices.  We then build $\mathcal{R}_1$ by selecting one representative from each orbit of the form \eqref{eq:r1n8}, and this takes under one minute.

For each $S \in \mathcal{R}_1$, we compute the minimum eigenvalue $\lambda$ of its leading $7 \times 7$ principal submatrix.  By \cref{lem:eig} and \eqref{eq:6by8bounds}, we know that if $S$ is optimal, then $\frac{3}{13} \leq -\lambda^{-1} \leq 0.2410$, and this rules out all but two members of $\mathcal{R}_1$ from being optimal.  Both of these are then ruled out by \cref{lem:eiginf}(a).  Thus, no member of $\mathcal{R}_1$ is optimal, and so we proceed to investigate $\mathcal{S}_2$.

Let $\mathcal{T}_2$ denote the subset of $\mathcal{S}_2$ with zero entries in its diagonal $2 \times 2$ blocks, and let $F_2$ denote the subgroup of $B_n$ that acts invariantly on $\mathcal{T}_2$.  Every member of $\mathcal{T}_2$ is equivalent to a matrix of the form
\begin{equation}\label{eq:r2n8}
\normalsize{\scriptsize{\left[
\begin{array}{rrrrrrrr}
0 & 0 & 1 & 1 & 1 & 1 & 1 & 1\\
0 & 0 & 1 & \pm 1 & \pm 1 & \pm 1 & \pm 1 & \pm 1\\
1 & 1 & 0 & 0 & \pm 1 & \pm 1 & \pm 1 & \pm 1\\
1 & \pm 1 & 0 & 0 & \pm 1 & \pm 1 & \pm 1 & \pm 1\\
1 & \pm 1 & \pm 1 & \pm 1 & 0 & 0 & \pm 1 & \pm 1\\
1 & \pm 1 & \pm 1 & \pm 1 & 0 & 0 & \pm 1 & \pm 1\\
1 & \pm 1 & \pm 1 & \pm 1 & \pm 1 & \pm 1 & 0 & 0\\
1 & \pm 1 & \pm 1 & \pm 1 & \pm 1 & \pm 1 & 0 & 0\\
\end{array}
\right]}}
\end{equation}
and so we can generate orbits of $\mathcal{T}_2$ under the action of $F_2$ by generating the orbits of these $2^{17}$ matrices.  We then build $\mathcal{R}_2$ by selecting one representative from each orbit of the form \eqref{eq:r2n8}; it takes roughly 15 minutes to produce the 560 elements of $\mathcal{R}_2$.

For 558 members of $\mathcal{R}_2$, we will show that they are not optimal by proving $m(S) > 0.2410$.  To do so, we introduce the decision variables $\{\mu, X_{12}, X_{34}, X_{56}, X_{78}\}$ and, for each member of $S \in \mathcal{R}_2$, build the symmetric matrix $G := I + \mu S + X$ with $(I + S) \circ X = 0$.  Then by definition, $m(S)$ is the infimum of $\mu$ such that $G$ is positive semidefinite with rank $6$ and $-\mu \leq X_{ij} \leq \mu$.  As in \cref{5by7section}, we will obtain a lower bound for $m(S)$ by completely relaxing the positive semidefinite constraint and partially relaxing the rank-6 constraint.  However, unlike that case, we were not able to find a suitable relaxation for which CAD both provided the necessary lower bound on $m(S)$ and also terminated in a reasonable amount of time.

We introduce the polynomials
\[
f := \{f_i\}_{i \in [10]} = \{\mu - \tfrac{3}{13}\} \cup \{0.2410 - \mu\} \cup \{\mu \pm X_{j,j + 1}\}_{j \in \{1,3,5,7\}},
\]
and we let $g = \{g_i\}_{i \in [12]}$ denote a carefully selected set of $7 \times 7$ minors of $G$ for which at least one of the variables $X_{j,j + 1}$ has degree 0 or 1.  Observe $P(f)\cap Z(g) = \emptyset$ implies $m(S) > 0.2410$, which then implies that $S$ is not optimal.  For most $S \in \mathcal{R}_2$, we will show that $P(f)\cap Z(g)$ is empty by producing an approximate Positivstellensatz certificate.  With $x = (\mu,X_{12},X_{34},X_{56},X_{78})$, we define
\begin{align*}
 C_{m}(f) &:= \bigg\{\sum_{I\subseteq[10]}s_I(x)\prod_{i\in I}f_i(x):s_I(x)\in\Sigma^2[x],~\deg(s_I) \leq m ~ \forall I\subseteq[10]\bigg\},\\
 I_{m}(g) &:=\bigg\{\sum_{j\in[12]}t_j(x)g_j(x):t_j(x)\in\mathbf{R}[x],~\deg(t_j) \leq m ~ \forall j\in[12]\bigg\}.
\end{align*}
By Stengle's Positivstellensatz, it suffices to produce $h_1 \in C_{m_1}(f)$ and $h_2 \in I_{m_2}(g)$ for which $h = 1 + h_1 + h_2$ satisfies \eqref{eq.bound on coefficients}.  We use a Julia-based implementation~~\cite{bezanson17,dunning17} of sum of squares programming to obtain numerical solutions $\hat{h}_1 \in C_{m_1}$ and $\hat{h}_2 \in I_{m_2}$ for which $\hat{h}_1 + \hat{h}_2 \approx -1$, that is, $(\hat{h}_1,\hat{h}_2)$ provides a numerical approximation to a putative certificate that $P(f)\cap Z(g)$ is empty.  We will promote $(\hat{h}_1,\hat{h}_2)$ to an honest certificate by carefully rounding.  We write each scalar $s_I$ for $\hat{h}_1$ as a sum of squares and, for each term being squared, round its coefficients to five decimal places.  We similarly round the coefficients for each scalar $t_j$ for $\hat{h}_2$ to five decimal places.  Let $h_1 \in C_{m_1}(f)$ and $h_2 \in I_{m_2}(f)$ denote the resulting rounded polynomials with rational coefficients.  As each of our five variables is less than $1/4$ in absolute value, we may use \eqref{eq.simpler bound} in place of \eqref{eq.bound on coefficients} so as to apply \cref{approxpsatz} and conclude that $P(f) \cap Z(g) = \emptyset$ whenever the largest coefficient of $1 + h_1 + h_2$ is at most $1/5$ in absolute value.

We apply this strategy to each $S \in \mathcal{R}_2$ with $m_1 = 2$.  On a first run, we take $m_2 = 0$ and successfully eliminate 545 members of $\mathcal{R}_2$ in roughly 5 hours.  On a second run, we take $m_2 = 1$ and eliminate another 13 members of $\mathcal{R}_2$ in roughly 8 minutes.  This leaves us with only two members of $\mathcal{R}_2$ that could be optimal, and we proceed to use CAD to show that both are indeed optimal.

For these CAD queries, we again impose the constraint $f_i \geq 0$ for all $i\in[10]$, but we found that requiring $g_i = 0$ for all $i\in[12]$ resulted in CAD computations that did not terminate in a reasonable amount of time.  We instead relaxed to only require $g_i = 0$ for a select few $i\in[12]$ that only depend on four of the five decision variables.  For both of the remaining $S \in \mathcal{R}_2$, the corresponding CAD query reports that the optimal Gram matrix $G$ is equivalent to $G_6$.  One of these computations takes roughly 18 minutes, while the other takes over three hours.
\end{proof}

\section{Discussion}

In this paper, we classified the optimal $(d+2)$-codes for $\mathbf{RP}^{d-1}$ for both $d\in\{5,6\}$.
The next open case in this direction is $d=8$.
Sloane's putatively optimal code~\cite{sloaneDatabase} is equiangular:
\[
\normalsize{
G_8:=
\scriptsize{\left[\begin{array}{rrrrrrrrrr}
1&-\mu&\mu&\mu&\mu&-\mu&\mu&\mu&-\mu&\mu\\
-\mu&1&-\mu&\mu&\mu&-\mu&\mu&\mu&\mu&\mu\\
\mu&-\mu&1&-\mu&\mu&-\mu&\mu&-\mu&\mu&-\mu\\
\mu&\mu&-\mu&1&\mu&-\mu&-\mu&-\mu&\mu&-\mu\\
\mu&\mu&\mu&\mu&1&\mu&-\mu&-\mu&-\mu&\mu\\
-\mu&-\mu&-\mu&-\mu&\mu&1&\mu&\mu&\mu&-\mu\\
\mu&\mu&\mu&-\mu&-\mu&\mu&1&-\mu&-\mu&-\mu\\
\mu&\mu&-\mu&-\mu&-\mu&\mu&-\mu&1&\mu&-\mu\\
-\mu&\mu&\mu&\mu&-\mu&\mu&-\mu&\mu&1&\mu\\
\mu&\mu&-\mu&-\mu&\mu&-\mu&-\mu&-\mu&\mu&1\\
\end{array}\right]},}
\]
where $\mu$ is given in \cref{table.opt packings}.

We expect that our current approach can already be used to partially tackle this case.
For example, our methods in Section~3 should be able to treat $\mathcal{R}_1$, but recall that it took 5 hours for us to rule out most of $\mathcal{R}_2$ in the $d=6$ case.
Considering $\mathcal{R}_2$ is over a thousand times larger in the $d=8$ case (see \cref{table.opt packings}), our methods should require the better part of a year to tackle this larger case.
For the record, our naive enumeration of the members of $\mathcal{R}_1$ is too slow for this case, but faster approaches are available, e.g., \cite{szollosi18}.
Still, $\mathcal{R}_2$ requires new ideas.
Is there a way to treat $\mathcal{R}_2$ in an analogous manner to our treatment of $\mathcal{R}_1$ in Section~3?
Previous work classified optimal codes for $S^2$ and for $\mathbf{RP}^2$ by leveraging spherical geometry and linear programming instead of Positivstellensatz~\cite{musin12,musin15,mixon19a}; perhaps an analogous approach is available here?
At the end of our approach, we use CAD to exactly optimize $G$ for any surviving $S\in\mathcal{R}_2$.
In the $d=8$ case, these CAD queries may not terminate in a reasonable amount of time.
We note that in the $d=6$ case, the Positivstellensatz step quickly produced an improved lower bound of $\mu_{8,6}\geq0.24$ before this CAD step, and including this information in our CAD query cut the three-hour runtime in half.
It might be possible to obtain improved lower bounds on $\mu_{10,8}$ even if CAD takes too long.

There might be some improvements available in our application of Positivstellensatz.
For example, we rounded our numerical approximations of Positivstellensatz certificates to five decimal places before using exact arithmetic to verify that the result satisfies the bound \eqref{eq.simpler bound}.
The exact arithmetic step might be faster if we had rounded to four decimal places (say), but we expect the bound \eqref{eq.simpler bound} to be violated if we round too much.
Next, in order for Positivstellensatz and CAD to have reasonable runtimes, we relaxed various determinant constraints.
While we have some heuristics for when a relaxation is good (e.g., some of the remaining polynomials have low degree in certain variables), this process remains an artform that deserves a careful treatment.

In prior work, numerical applications of Stengle's Positivstellensatz come in two different types.
The first type solves a sum-of-squares program numerically, and then performs what appears to be a handcrafted rounding step to ensure that $-1$ exactly resides in the set $C(f)+I(g)$; see~\cite{parrilo03}, for example.
This approach was not suitable for our purposes since we were solving hundreds of sum-of-squares programs.
The second type takes the numerical result that $h\approx-1$ resides in $C(f)+I(g)$ as sufficient evidence that $P(f)\cap Z(g)$ is empty; see~\cite{davis11}, for example.
Since this does not constitute a proof, it was also not suitable for our purposes.
Presumably, \cref{approxpsatz} could replace the ad-hoc strategy of the first type and give theoretical justification for the second type.
Furthermore, it would be interesting if \cref{approxpsatz} could provide sum-of-squares certificates of lower degree than Stengle's original Positivstellensatz.

Finally, we point out some problems that are adjacent to ours.
While we have focused on real projective spaces, the analogous question can be posed in complex projective spaces $\mathbf{CP}^{d-1}$.
Here, the optimal $n$-codes are known for $n\leq d+1$, but they are similarly mysterious for $n=d+2$.
Since $\mathbf{CP}^1$ is the $2$-sphere, the optimal $4$-code for $\mathbf{CP}^1$ is given by the vertices of the tetrahedron.
More generally, Bukh and Cox~\cite{bukh19} characterize the optimal $(d+2)$-codes for $\mathbf{CP}^{d-1}$ for every $d\equiv 2\bmod 4$.
These are the only solved cases.
For the $d=3$ case, Jasper, King and Mixon~\cite{jasper19} conjecture that the optimal $5$-code is given by the lines spanned by the columns of
\[
\left[\begin{array}{lllll}
a&b&b&c&c\\
b&a&b&cw&cw^2\\
b&b&a&cw^2&cw
\end{array}\right],
~a=\frac{\sqrt{13}+\sqrt{2+\sqrt{13}}-1}{3\sqrt{3}},
~b=\sqrt{\frac{1-a^2}{2}},
~c=\frac{1}{\sqrt{3}},
~w=e^{2\pi i/3}.
\]
Furthermore, King will buy a coffee for the first person to prove this conjecture~\cite{dustinBlog}.
Our methods do not easily transfer to this setting since sign patterns in the Gram matrix are no longer discrete.

The analogous question has also been posed in the sphere $S^{d-1}$, where the optimal $n$-codes are known for $n\leq 2d$.
For $n=2d+1$, little is known.
For $d=2$, the optimal code is given by five uniformly spaced points on the circle, and the $d=3$ case was solved by Sch\"{u}tte and van der Waerden~\cite{schutte51} in 1951.
Ballinger et al.~\cite{ballinger09} offer a conjecture that treats all dimensions simultaneously:
Let $S\in\mathbf{R}^{(d-1)\times d}$ be a matrix whose unit-norm columns form the vertices of a regular simplex.
The putatively optimal $(2d+1)$-code for $S^{d-1}$ is unique up to isometry and given by the columns of
\[
\left[\begin{array}{ccc}
1&\alpha&\beta\\
0&\sqrt{1-\alpha^2}\cdot S&-\sqrt{1-\beta^2}\cdot S
\end{array}\right],
\]
where $\alpha$ is the unique root between $0$ and $1/d$ of
\[
(d^3-4d^2+4d)x^3-d^2x^2-dx+1,
\]
and $\beta$ is the unique root between $-1$ and $1$ of
\[
\alpha x+\frac{1}{d-1}\sqrt{(1-\alpha^2)(1-x^2)}-\alpha.
\]
Our methods do not easily transfer to this setting since the contact graphs are far less dense, meaning the resulting programs have more decision variables.

\section*{Acknowledgments}

DGM was partially supported by AFOSR FA9550-18-1-0107, NSF DMS 1829955, and the 2019 Kalman Visiting Fellowship at the University of Auckland.  HP was partially supported by an AMS-Simons Travel Grant.

\bibliography{sosbib}
\bibliographystyle{amsplain}

\end{document}